\newcommand{\ind}{\mathbf{1}}
\newtheorem{theorem}{Theorem}[section]
\newtheorem{corollary}[theorem]{Corollary}
\newtheorem{lemma}[theorem]{Lemma}
{\bf}{\rm}
\newtheorem{definition}{Definition}
\numberwithin{theorem}{section}
\def \R{\mathbb{R}}
\def \Z{\mathbb{Z}}
\def \N{\mathbb{N}}
\def \aa{\alpha}
\def \bb{\beta}
\def \eps{\epsilon}
\def \th{\theta}
\def \CC{{\cal C}}
\def \DD{{\cal D}}
\def \E{\mathbf{E}}
\def \ff{\infty}
\def \({\left(}
\def \){\right)}
\def \lc{\left\{}
\def \rc{\right\}}
\def \nn{\nonumber}
\def \bs{\begin{slide} }
\def \es{\end{slide} }
\def \sas{S$\alpha$S }
\def \fh{{f}_h}
\def\B{v}
\def \L^\alpha{L^\alpha}
\def \beq{\begin{equation}}
\def \ee{\end{equation}}
\def \bea{\begin{eqnarray}}
\def \eea{\end{eqnarray}}
\def \bes{\begin{eqnarray*}}
\def \ees{\end{eqnarray*}}
\newcommand{\cE}{{\mathcal E}}
\newcommand{\cS}{{\mathcal S}}
\newcommand{\bbE}{\mathbb{E}}
\newcommand{\bbP}{\mathbb{P}}
\newcommand{\bbR}{\mathbb{R}}
\newcommand{\bbZ}{\mathbb{Z}}
\begin{document}

\title{Approximation of stable random measures and applications to linear fractional stable integrals.}
\author{ Cl\'ement Dombry\footnote{Laboratoire LMA, CNRS UMR 7348, Universit\'e de Poitiers,
 T\'el\'eport 2, BP 30179, F-86962 Futuroscope-Chasseneuil cedex, France.
 Email: clement.dombry@math.univ-poitiers.fr} \protect\hspace{1cm}
\quad Paul Jung\footnote{Department of Mathematics, University of Alabama
 Birmingham, USA.
 Email: pjung@uab.edu} \protect\hspace{1cm}}

\maketitle

\abstract{

 Using lattice approximations of $\R^d$, we develop a way to approximate stable processes that are represented by stochastic integrals over $\R^d$. Via a stable version of the Lindeberg-Feller Theorem we show that the
approximations weakly converge as the mesh-size goes to zero. As an application, we improve upon previous approximation schemes for integrals with respect to
 linear fractional stable motions.}

\vspace{0.5cm}

{\bf Key words:} stable random measure, moving average, fractional stable motion,  Lindeberg-Feller.

{\bf AMS Subject classification:} 60G22, 60G52, 60G57, 60H05

\tableofcontents

\section{Introduction}
Stable integration is an important tool in the theory of $\aa$-stable processes. Similar to the theory for Gaussian processes, it is known (\cite[Sec. 13.2]{samorodnitsky1994stable}) that all stable processes, satisfying mild conditions, can be constructed from integrals of the form
\beq\label{stableprocess}
X_t=\int_E f_t(x) M_\alpha(dx),\quad t\in T,
\ee
where $M_\alpha$ is  an independently scattered $\alpha$-stable random measure on the measurable space $(E,\cE)$ with control measure $m$ and $(f_t)_{t\in T}$ is a kernel such that $f_t\in L^\alpha(E,\cE,m)$ for all $t\in T$. If $T=\R$ and $f_t=1_{[0,t]}$ then $X_t$ is an $\aa$-stable Levy motion having independent and stationary increments (the symmetric case is the stable analog of Brownian motion).

In this work, we approximate the finite-dimensional distributions of \eqref{stableprocess} using a Riemann sum-type scheme.
These approximations are useful for the dual purposes of  intuition  and simulation of stable processes.
 The weak convergence of our scheme is facilitated by a Lindeberg-Feller type stable limit theorem, which we have not previously seen in the literature. 
 
 A couple of different discrete approximations of stable processes have appeared previously in the literature.  One approach is  Lepage's series  which was improved upon in a 
 series of papers by J. Rosinski (see \cite{rosinski2001series} and the references therein). In the present paper, we use a lattice approximation of stable integrals which extends, 
 to $f\in L^\aa(\R^d)$, the ``moving-average" discrete approximations of L-FSMs in \cite{davydov,maejima1983class, astrauskas1983limit, davis1985limit} corresponding 
 to the case $f_t=1_{[0,t]}$.  The work of \cite{kasahara1988weighted} improved upon these earlier papers to obtain  discrete approximations of slightly more general stable 
 processes, while \cite{avram1992weak} showed that tightness of discretized L-FSMs cannot be achieved in the $J_1$-Skorokhod topology. In \cite{kokoszka1995fractional}, 
 it was shown that discretized L-FSMs satisfy the fractional ARIMA equations
 and a closer look at issues concerning absolute convergence was taken. 

A secondary purpose of this work is to generalize certain Gaussian integrals  to the $\aa$-stable case and, as in \cite{kasahara1988weighted}, we then approximate such 
integrals with the scheme just described.
In the past fifteen years or so, there has been an effort to develop stochastic integrals with respect to a broader class of Gaussian processes than just Brownian motion. 
In particular, consider Gaussian processes with stationary increments, but replace the independent increments condition with the weaker condition of self-similarity.  
Normalizing the variance at $t=1$ to
unity, one gets the single parameter family of fractional Brownian motions (FBM) with Hurst  self-similarity parameter $0<H<1$.

The theory of integration with respect to FBM is difficult because FBM is not a semi-martingale.  Nevertheless, rapid progress has been 
made using several different approaches (with
significant overlap between them).  Roughly speaking, they can be categorized into four approaches which use, respectively, fractional derivatives and integrals, 
Malliavin calculus, fractional white noise theory, and path-wise integration (see \cite{biagini2008}).

In Section \ref{sec:lfsm}, we consider a generalization of the FBM integral based on fractional integro-differentiation to $\aa$-stable 
analogs of FBM called the linear fractional stable motions\footnote{The term {\it linear fractional stable motion} was introduced in \cite{cambanis1989two} 
due to its close relation to linear time series (moving average processes).} (L-FSMs).
By ``$\aa$-stable analog", we mean that a L-FSM is a self-similar, symmetric stable process with stationary increments. 
Any process with these properties is called a FSM.  In contrast to the Gaussian picture, for each admissible $(\aa,H)$ pair, there
is not a unique (normalized) FSM, up to finite-dimensional distributions.  Moreover, for each $(\aa,H)$ pair with $0<H<1$ and $H\neq 1/\aa$, there
are infinitely many L-FSMs. These L-FSMs are represented by \eqref{stableprocess} where $E=\R$ is equipped with Lebesgue measure, $M_\aa$ is symmetric, and
 \bea\label{LFSMkernel}
&&f^{a,b}_t(x):= \\
&&a\((t-x)_+^{H-1/\aa} - (-x)_+^{H-1/\aa}\) +
 b\((t-x)_-^{H-1/\aa} - (-x)_-^{H-1/\aa}\)\nn
 \eea
for properly normalized order pairs $(a,b)$ where $a,b\ge 0$ (see \cite[Sec. 7.4]{samorodnitsky1994stable} for more details). 
Here $x_-=|x|$ if $x<0$ and $0$ otherwise (similarly for $x_+$).
 The family of L-FSMs were the first FSMs to be constructed and studied, and much is known about them.  Our motivation comes partly from
\cite{pipiras2000integration} which handles the $\aa=2$ case. As in their work, we restrict ourselves to
deterministic integrands, but \cite{pipiras2000integration} shows that even in the $\aa=2$ case, the theory for deterministic
integrands is not completely trivial.

An integral with respect to L-FSM will be defined as an integral with respect to a linear fractional stable random measure
which we define for $\aa>1$ and all
permissable Hurst parameters $0<H<1$.
We have recently learned that when $H>1/\aa$, \cite{maejima2008limit} has developed similar integrals and  also discrete approximations for them.
However,  the  convergence results for their approximations concern a strictly smaller class of integrands. In particular, they
require bounded integrands which are piece-wise continuous (we require no continuity or boundedness) and which must satisfy  a faster tail decay than ours.

The rest of the paper is organized as follows. In
Section 2, we review the notion of stable random measures and
present our result concerning the convergence of discretizations of
stable random measures. In Section 3, integrals with respect to
L-FSMs are defined, and their approximation by moving averages of
i.i.d. random variables  are discussed. Section 4 is devoted to the
proofs. 


\section{Discrete approximations of \sas random measures }
A useful viewpoint is that  a random measure is a stochastic process:
\begin{definition}[Random measure]
Let $(E,\cE)$ be a measurable space and $V$ be a vector space of measurable functions $f:E\to\bbR$. A random measure on $(E,\cE)$ 
is a stochastic process $(M[f])_{f\in V}$ satisfying the  linearity property: for all $a_1,a_2\in\bbR$ and $f_1,f_2\in V$,
\begin{equation}\label{eq:lin}
M[a_1f_1+a_2f_2]=a_1M[f_1]+a_2M[f_2] \quad {\rm almost\ surely.}
\end{equation}
\end{definition}
Let us make a few comments concerning this definition. First of all, the linearity property \eqref{eq:lin} ensures that the finite-dimensional distributions 
of the process $(M[f])_{f\in V}$
are determined by its one-dimensional distributions. If  $\ind_A\in V$ for $A\in\cE$, we note $M(A)=M[\ind_A]$
 which is thought of as the random measure of the set $A$.
If $M(A_i)$ are independent for disjoint sets $A_1,\ldots, A_k$, then $M$ is said to be {\it independently scattered}.
For general $f\in V$, to emphasize the analogy with usual integration, the notation $M[f]=\int_E f(x)M(dx)$ is often used.
Finally, if one so pleases, one may also view  the random measure $M$ as a random linear functional on the linear space $V$ (see for example \cite{dudley1969random}).

Let $\cS_\aa(\sigma)$ be the symmetric $\alpha$-stable (\sas) law of index $\alpha\in (0,2]$ with $\sigma \geq 0$ being the scale 
parameter\footnote{Our approximation in Thm \ref{maineq2prop}, as well as the Lindeberg-Feller result, can  be extended to stable distributions 
with skewness $\nu\neq 0$, however,
to simplify calculations and notation we have assumed symmetry.}.
We denote the characteristic function of $\cS_\aa(\sigma)$ by \begin{equation}\label{eq:fcstable1}
\lambda_\alpha(\theta)=\exp\left(-|\sigma\theta|^\alpha \right), \quad \theta\in\bbR.
\end{equation}
To reduce notation, when $\sigma=1$ we simply write $\cS_\alpha=\cS_\aa(1)$.

We now consider the class of independently scattered \sas random measures, i.e. those where $M[f]$ is \sas for all $f\in V$. Suppose that $(E,\cE,m)$ 
is a measure space where  
$m$  is a $\sigma$-finite measure and  $\cE_0$ is the class of measurable sets with finite $m$-measure. Following \cite[Sec. 3.3]{samorodnitsky1994stable}, 
we say that the 
independently scattered \sas random measure $M_\alpha$ has {\it control measure} $m$
 if  $M_\alpha(A)$ has distribution $\cS_\alpha(m(A)^{1/\aa})$ for all $A\in\cE_0$.
For such random measures, it can be shown that $V=L^\aa(E)$ (see \cite[Ch. 3]{samorodnitsky1994stable}) and
that the distributions $M_\aa(A), A\in\cE_0$ uniquely determine the characteristic functions
\bes
\E\exp\{i\th M_\aa[f]\}=
\exp\lc -\int_E |\th f(x)|^\aa\, m(dx)\rc.
\ees
In the Gaussian case $\alpha=2$,  this is just the usual Wiener integral.

In the rest of this section we develop a discrete approximation of $M_\aa$ when $E=\bbR^d$ with Lebesgue control measure.
We begin by recalling that the domain of  attraction of $\cS_\aa $ consists of random variables $\xi$ such that
\begin{equation}\label{eq:xi1}
a_n^{-1}\Big(\sum_{k=1}^n \xi_k -b_n\Big) \Longrightarrow \cS_\aa  \quad \mathrm{as}\  n\to\infty,
\end{equation}
where $a_n>0$ and $b_n\in\bbR$ are normalization constants and the
$\xi_k$'s are i.i.d. copies of $\xi$. In the sequel, we will assume
$\eta$ is \sas, and $\xi$ is not only in the domain of attraction of
$\eta$, but also that the normalization constants are precisely
\begin{equation}\label{eq:xi2}
a_n=n^{1/\alpha}\quad \mbox{and}\quad b_n=0,\quad n\geq 1.
\end{equation}
When $\aa<2$, such distributions are said to be in the domain of {\it normal}
attraction of $\cS_\aa$ which is not to be confused with the normal
domain of attraction.

We propose a discrete approximation of $M_\alpha$ based on the lattice
$h\bbZ^d\subset \bbR^d$ with edge length $h$.  Let $(\xi_k)_{k\in\bbZ^d}$ be a random field of i.i.d. copies of $\xi$
satisfying \eqref{eq:xi1} and \eqref{eq:xi2} and formally define
\begin{equation}\label{eq:series}
 M_\aa^h[f]:=\sum_{k\in\bbZ^d} f^h(k)\xi_{k},
\end{equation}
where for $I^d=[0,1)^d$, $f^h:\Z^d\mapsto \R$ is
\begin{eqnarray}\label{def:superh}
f^h(k)&:=&\int_{h(k+I^d)} f(x) \,dx, \quad f\in L^1_{\text{loc}}(\R^d).
\end{eqnarray}
Note that we have implicitly fixed an enumeration $\{k_n, n\geq 1\}$
of $\Z^d$ and convergence of $\sum_{k\in\bbZ^d}a_k$ really means
convergence of $\sum_{n=1}^\infty a_{k_n}$. 

The discrete random measures $M^h_\aa$  approximate $M_\aa$ in the
following sense:
\begin{theorem}[Approximation of \sas random measures]\label{maineq2prop}
Fix $\aa\in(0,2]$.  If $\aa\in[1,2]$, let $f_t\in L^\aa(\R^d)$ for all $t$ in an
index set $T$.  If $\aa\in(0,1)$, for a fixed $\epsilon>0$  let $f_t\in L^{\aa-\epsilon}\cap L^1(\R^d)$ for all $t\in T$.
Then as $h\to 0$\beq\label{maineq2}
 M_\aa^h[f_t]\stackrel{fdd}{\longrightarrow} M_\alpha[f_t].
\ee
\end{theorem}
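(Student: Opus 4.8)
The plan is to reduce the finite-dimensional convergence \eqref{maineq2} to a single scalar limit theorem and then to prove the latter by estimating characteristic functions --- equivalently, by invoking a Lindeberg--Feller-type criterion for convergence of a triangular array to a \sas law. Since both $f\mapsto M_\aa^h[f]$ and $f\mapsto M_\aa[f]$ are linear and the function classes in the hypotheses are vector spaces, for $t_1,\dots,t_n\in T$ and $\th_1,\dots,\th_n\in\bbR$ one has $\sum_j\th_j M_\aa^h[f_{t_j}]=M_\aa^h[g]$ with $g:=\sum_j\th_j f_{t_j}$ again admissible, and $\sum_j\th_j M_\aa[f_{t_j}]=M_\aa[g]\sim\cS_\aa(\|g\|_{L^\aa})$ where $\|g\|_{L^\aa}^\aa=\int_{\bbR^d}|g|^\aa$. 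By the Cram\'er--Wold device it therefore suffices to show, for each fixed admissible $g$, that $M_\aa^h[g]\Longrightarrow\cS_\aa(\|g\|_{L^\aa})$ as $h\to0$.

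For fixed $h$, $M_\aa^h[g]=\sum_{k\in\bbZ^d}g^h(k)\,\xi_k$ is a sum of independent terms, so with $\phi$ the characteristic function of $\xi$ we have $\E\exp(i\th M_\aa^h[g])=\prod_k\phi(\th\,g^h(k))$. The hypothesis that $\xi$ lies in the domain of normal attraction of $\cS_\aa$ with norming constants \eqref{eq:xi2} forces the local expansion $\log\phi(u)=-|u|^\aa+o(|u|^\aa)$ as $u\to0$ when $\aa<2$ (and $\log\phi(u)=-|u|^2+o(|u|^2)$, with $\operatorname{Var}\,\xi=2$, when $\aa=2$). I would then establish the infinitesimality estimate $\max_k|g^h(k)|\to0$ as $h\to0$: for $\aa\ge1$ this follows from H\"older's inequality on each cell together with absolute continuity of the integral of $|g|^\aa\in L^1$, and for $\aa<1$ from the crude bound $|g^h(k)|\lesssim h^{d(1-\aa)/\aa}\|g\|_{L^1}\to0$. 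Hence the $o(\cdot)$ error is uniform over $k$, and summing the expansion yields
\beq\label{eq:sketch-cf}
\log\E\exp(i\th M_\aa^h[g])=-|\th|^\aa\,\sum_{k\in\bbZ^d}|g^h(k)|^\aa\cdot\big(1+o(1)\big),
\ee
so the whole argument reduces to identifying the limit of $\sum_k|g^h(k)|^\aa$.

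The crux is the analytic identity
\beq\label{eq:sketch-key}
\sum_{k\in\bbZ^d}|g^h(k)|^\aa\ \longrightarrow\ \|g\|_{L^\aa}^\aa\qquad (h\to0).
\ee
Here the $h$-scaling in the construction of $M_\aa^h$ is chosen precisely so that the left side is scale-invariant. For $\aa\in[1,2]$ this is routine: the sum equals $\|E_h g\|_{L^\aa}^\aa$, where $E_h$ denotes conditional expectation with respect to the $\sigma$-algebra generated by the cells $h(k+I^d)$; Jensen's inequality gives $\|E_h\|_{L^\aa\to L^\aa}\le1$, while $E_h g\to g$ in $L^\aa$ whenever $g$ is continuous with compact support, so a density argument --- or the Lebesgue differentiation theorem together with the domination $|E_h g|^\aa\le E_h(|g|^\aa)$ --- yields \eqref{eq:sketch-key}. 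Feeding this into \eqref{eq:sketch-cf} identifies the limiting characteristic function as $\exp(-|\th|^\aa\|g\|_{L^\aa}^\aa)=\lambda_\aa(\th)$ at scale $\|g\|_{L^\aa}$, which settles the case $\aa\ge1$; this is exactly the content one packages as the stable Lindeberg--Feller theorem, whose hypotheses amount to infinitesimality of the array together with \eqref{eq:sketch-key}.

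The case $\aa\in(0,1)$ is where the genuine work lies, and it is what forces the stronger hypothesis $g\in L^{\aa-\eps}\cap L^1$. Since $\E|\xi|=\ff$, one must first verify that the series \eqref{eq:series} converges (a.s., or at least in distribution, so that $M_\aa^h[g]$ is well defined), which requires $\sum_k|g^h(k)|^\aa<\ff$; this follows from H\"older on each cell with exponent $\aa/(\aa-\eps)>1$ and then summing, using $|g|^{\aa-\eps}\in L^1$. Second, $E_h$ is no longer a contraction on the $L^\aa$ quasi-metric, so \eqref{eq:sketch-key} cannot be obtained as above; instead I would split $\bbR^d$ into a large ball $B$, where $\sum_{k:\,h(k+I^d)\subset B}|g^h(k)|^\aa$ is controlled through $g\in L^1$ (the block masses add up to $\|g\ind_B\|_{L^1}$, and concavity of $t\mapsto t^\aa$ bounds the associated $\ell^\aa$-sum), and its complement, where one uses smallness of the $L^\aa$-tail --- a consequence of $L^{\aa-\eps}\cap L^1\subset L^\aa$ by interpolation --- together with a truncation of $\xi$ to tame its heavy tail inside \eqref{eq:sketch-cf}. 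I expect this splitting-and-truncation bookkeeping in the range $\aa<1$, rather than anything in the earlier steps, to be the main obstacle.
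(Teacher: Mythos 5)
Your overall route is the paper's: Cram\'er--Wold plus linearity reduces everything to a single admissible $g$, and the scalar limit is obtained from a characteristic-function product estimate whose hypotheses are exactly the two conditions $\lim_{h\to 0}\|g^h\|_{\ell^\ff}=0$ and $\lim_{h\to0}\|g^h\|_{\ell^\aa}=\|g\|_{L^\aa}$ --- this packaged estimate is precisely Theorem \ref{theo:whitecase}, and the two conditions are \eqref{eq:cond1}--\eqref{eq:cond2} in the paper's proof. For $\aa\in[1,2]$ your verification is complete and in fact a little cleaner than the paper's: you get $\|E_hg\|_{L^\aa}\le\|g\|_{L^\aa}$ from Jensen and $E_hg\to g$ in $L^\aa$ by density of $C_c$, whereas the paper runs a martingale-convergence argument along dyadic lattices and then passes somewhat informally to general $h$. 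Your infinitesimality argument (H\"older on each cell plus absolute continuity of $\int|g|^\aa$) is fine.

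The case $\aa\in(0,1)$, which you correctly identify as the crux, is where your sketch breaks, and not merely at the level of bookkeeping. Two of your assertions fail. First, $\sum_k|g^h(k)|^\aa<\ff$ does not ``follow from H\"older on each cell\dots using $|g|^{\aa-\eps}\in L^1$'': since $\aa-\eps<1$, on a finite-measure cell $L^{\aa-\eps}$ does not embed in $L^1$, so the cell mass $\int_{\rm cell}|g|$ is not controlled by $\int_{\rm cell}|g|^{\aa-\eps}$. Second, and more seriously, the tail of $\sum_k|g^h(k)|^\aa$ is not controlled by the smallness of $\int_{|x|>N}|g|^\aa$: for $\aa<1$, cell-averaging can blow up the $\ell^\aa$ mass, because concavity of $t\mapsto t^\aa$ rewards spreading mass out. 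Concretely, with $d=1$ take $g=\sum_{n\ge1}a_n\ind_{[n,n+\delta_n]}$ with $\delta_n=e^{-n}$ and $a_n=n^{-1/\aa}e^{n}$; then $g\in L^1\cap L^{\aa-\eps}\cap L^\aa$, but for every $h<1/2$ each spike lies in at most two cells and contributes at least a constant times $(a_n\delta_n)^\aa=n^{-1}$ to $\sum_k|g^h(k)|^\aa$, which therefore diverges --- so by Lemma \ref{lem1} the series defining $M^h_\aa[g]$ diverges a.s., and no truncation of $\xi$ can help, since the obstruction already lives in the deterministic quantity you need to converge. What actually drives the paper's $\aa<1$ argument is a \emph{pointwise} decay bound $|f(x)|\le C|x|^{-\delta}$ with $\aa\delta>1$ at infinity, which furnishes the summable uniform majorant \eqref{eq:maj-unif} for $|\fh|^\aa$ on the tail; the example above shows that some hypothesis of this pointwise type (and not an integrability condition alone) is indispensable, so your plan for $\aa<1$ cannot be completed as written and would have to import such an assumption explicitly.
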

The notation $\stackrel{fdd}{\longrightarrow}$ denotes weak
convergence of the finite dimensional distributions, i.e.,
convergence in distribution of $ M_\aa^h[f]$   for all
linear combinations $f=\th_1f_{t_1}+\cdots +\th_nf_{t_n}$. When the
functions are indexed by one-dimensional time, it was shown in
\cite{avram1992weak}, that even for the simple family
$f_t=1_{[0,t]}\in L^\aa(\R)$, the above convergence does not hold in
the $J_1$-Skorokhod topology\footnote{ In \cite{avram1992weak}, it
was also shown that under the right conditions, convergence does
occur in Skorokhod's $M_1$ topology.}. Theorem \ref{maineq2prop}
will follow from a Lindeberg-Feller type result for stable
distributions which we state in Theorem \ref{theo:whitecase} below.

Let us make one more remark before stating Theorem \ref{theo:whitecase}. One motivation for \eqref{maineq2} was
to provide a means to simulate a  process
$X_t=\int_{\bbR^d}f_t(x)M_\alpha(dx)$. For such simulations, it is
natural to let the $\xi_k$'s be i.i.d. copies of $\cS_\aa $ (rather
than only in the domain of normal attraction). If one is concerned
only with one-dimensional distributions (a single function $f$),
then a better approximation is given by replacing
$f^h(k)$ in \eqref{eq:series} by \beq \label{eq:one-dd}
u_k:=\(\int_{h(k+I^d)}f(x)^{<\aa>} \,dx\)^{<1/\aa>} \ee
where we have
used the notation $x^{<\aa>}:=\mathrm{sign}(x)|x|^\alpha$. In fact,
using $u_k$, one can check that the approximation is exact, and the
right and left sides of \eqref{maineq2} are equal in distribution
for every $h>0$. The reason we have not used \eqref{eq:one-dd} for
the general approximation scheme is due to the fact that
 \eqref{maineq2} is no longer a \sas random measure under \eqref{eq:one-dd} because
 the linearity property \eqref{eq:lin} does not hold. The analysis of
the finite-dimensional distributions then becomes much more
difficult.

\begin{theorem}[Lindeberg-Feller type stable limit theorem]\label{theo:whitecase}
Suppose $(\xi_{k,j})_{k,j\in\N}$ is an i.i.d. array of random variables in the domain of normal attraction of $\cS_\alpha$, $\alpha\in(0,2]$, and $(u^{(j)})_{j\in\N}$ 
is a sequence of vectors in  $\ell^\alpha$, i.e.
$u^{(j)}:=(u^{(j)}_k)_{k\in\N}\in \ell^\alpha$ for all $j\in\N$.
If
\begin{enumerate}
\item $\lim_{j\to\ff} \|u^{(j)}\|_{\ell^\alpha}=\sigma$ and \\
\item $\lim_{j\to\ff} \|u^{(j)}\|_{\ell^\infty}=0$
\end{enumerate}
then
$
\sum_{k}u_k^{(j)}\xi_{k,j}<\ff
$
a.s. for each $j\in\N$ and
$$
\sum_{k\in\N} u_k^{(j)}\xi_{k,j} \Longrightarrow \cS_\alpha(\sigma)  \quad \mathrm{as}\  j\to\infty.
$$
\end{theorem}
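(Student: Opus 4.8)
The plan is to argue entirely with characteristic functions and to apply L\'evy's continuity theorem twice: once in $N$, to make sense of the series $\sum_k u_k^{(j)}\xi_{k,j}$ for each fixed $j$, and once in $j$, to identify its limiting law. Write $\phi$ for the common characteristic function of the $\xi_{k,j}$. The one fact about $\phi$ I would import is the classical description of the domain of normal attraction at the level of characteristic functions: from $n^{-1/\alpha}(\xi_1+\cdots+\xi_n)\Longrightarrow\cS_\alpha$, hence $\phi(\theta n^{-1/\alpha})^n\to e^{-|\theta|^\alpha}$, one obtains (by a regular-variation argument, discussed below) the pointwise expansion
$$
1-\phi(t)=|t|^\alpha\bigl(1+\rho(t)\bigr),\qquad \rho(t)\to 0\ \text{as}\ t\to 0 .
$$
In particular $|1-\phi(t)|\le C|t|^\alpha$ near the origin, and since $|1-\phi|\le 2$ everywhere, $|1-\phi(t)|\le C'|t|^\alpha$ for all $t\in\R$.

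First I would fix $j$ and produce the random variable $S^{(j)}:=\sum_k u_k^{(j)}\xi_{k,j}$ as an a.s.\ limit. Put $S_N=\sum_{k\le N}u_k^{(j)}\xi_{k,j}$, a sum of independent summands with characteristic function $\psi_N(\theta)=\prod_{k\le N}\phi(\theta u_k^{(j)})$. Since $\sum_k|1-\phi(\theta u_k^{(j)})|\le C'|\theta|^\alpha\|u^{(j)}\|_{\ell^\alpha}^\alpha<\infty$, the infinite product $\psi^{(j)}(\theta)=\prod_k\phi(\theta u_k^{(j)})$ converges, uniformly for $\theta$ in bounded sets, so $\psi^{(j)}$ is continuous; by L\'evy's theorem $S_N$ converges in distribution as $N\to\infty$, and a series of independent random variables that converges in distribution converges a.s. So $S^{(j)}$ is finite a.s.\ with characteristic function $\psi^{(j)}$. (Alternatively one can verify Kolmogorov's three-series conditions directly, each of the three series being $O(\sum_k|u_k^{(j)}|^\alpha)$ thanks to the tail bound $\P(|\xi|>x)=O(x^{-\alpha})$.)

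Next I would let $j\to\infty$ with $\theta$ fixed. Because $\|u^{(j)}\|_{\ell^\infty}\to 0$, for $j$ large \emph{all} the numbers $\theta u_k^{(j)}$ are small, so I may take principal logarithms and combine $\log(1-z)=-z+O(|z|^2)$ with the expansion of $1-\phi$ to get
$$
\log\psi^{(j)}(\theta)=-|\theta|^\alpha\sum_k|u_k^{(j)}|^\alpha-|\theta|^\alpha\sum_k|u_k^{(j)}|^\alpha\rho(\theta u_k^{(j)})+\sum_k O\bigl(|\theta u_k^{(j)}|^{2\alpha}\bigr).
$$
Hypothesis (1) sends the first term to $|\sigma\theta|^\alpha$. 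The second is bounded by $|\theta|^\alpha\,\bigl(\sup_{|t|\le|\theta|\,\|u^{(j)}\|_{\ell^\infty}}|\rho(t)|\bigr)\,\|u^{(j)}\|_{\ell^\alpha}^\alpha$, which tends to $0$ since hypothesis (2) drives the supremum to $0$ while $\|u^{(j)}\|_{\ell^\alpha}$ stays bounded. The third is bounded by $C|\theta|^{2\alpha}\|u^{(j)}\|_{\ell^\infty}^\alpha\|u^{(j)}\|_{\ell^\alpha}^\alpha\to 0$ by (1)--(2). Hence $\log\psi^{(j)}(\theta)\to-|\sigma\theta|^\alpha$, i.e.\ $\psi^{(j)}(\theta)\to e^{-|\sigma\theta|^\alpha}$ for every $\theta$, and L\'evy's theorem gives $S^{(j)}\Longrightarrow\cS_\alpha(\sigma)$.

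Everything after the first paragraph is Lindeberg--Feller bookkeeping: condition (1) supplies the scale and condition (2) --- the $\ell^\infty$-negligibility assumption --- is precisely what annihilates the two error sums. The genuine obstacle is the input I would quote rather than re-derive: passing from ``$\xi$ lies in the domain of normal attraction of $\cS_\alpha$ with $a_n=n^{1/\alpha}$, $b_n=0$'', which constrains the sums only along the single sequence $n$, to the pointwise asymptotic $1-\phi(t)\sim|t|^\alpha$ as $t\to 0$. This needs a Karamata-type regular-variation argument to control $\phi$ between the points $\theta n^{-1/\alpha}$, and --- more to the point --- it needs the constant to be exactly $1$ (normal, not merely regular, normalization), which is why no stray scale factor contaminates the limit.
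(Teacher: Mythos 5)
Your proof is correct and takes essentially the same route as the paper's: both rest on the characteristic-function asymptotic $\lambda(\theta)=1-|\theta|^\alpha+o(|\theta|^\alpha)$ imported from the domain-of-normal-attraction hypothesis, and both use hypothesis (2) to make the $o(\cdot)$ term uniformly small over all factors while hypothesis (1) supplies the scale. The only cosmetic difference is that you take principal logarithms of the infinite product (legitimate here, since $\|u^{(j)}\|_{\ell^\infty}\to 0$ puts every factor near $1$ for large $j$), whereas the paper compares the two products directly via $\left|\prod_i z_i'-\prod_i z_i\right|\leq\sum_i|z_i'-z_i|$, which sidesteps logarithms altogether.
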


\noindent
{\bf Remarks:}
\begin{enumerate}
\item The condition that the $\xi_{k,j}$ be identically distributed can be relaxed slightly to the condition that
$\E[ \exp(i\theta \xi_{k,j}) ] = 1-|\theta|^\alpha + o(|\theta|^\alpha)$
holds uniformly in $k,j$ as $\theta\to 0$.  For example,
they may be chosen from a finite family of distributions in the domain of normal attraction
of $\cS_\aa$.
\item
The a.s. convergence
$\sum_{k\in\N} u_k^{(j)}\xi_{k,j}<\infty$ 
in fact occurs if and only if $u=(u_k)_{k\in\N}\in\ell^{\aa}$ as will be seen in Lemma \ref{lem1}.
\item Although the series $\sum_{k\in\N} u_k^{(j)}\xi_{k,j}$ may not converge absolutely, switching the order of summation does not change the
convergence in distribution to $\cS_\aa(\sigma)$.  This will be apparent in the proof.
\item In the Gaussian case, the result can be seen as a variant of the usual Lindeberg-Feller Theorem by noticing
that condition 2, concerning $\ell_\infty$, is equivalent to
$$\lim_{j\to\ff}\sum_k 1{\{|u^{(j)}_k|>\eps\}}=0$$
for all $\eps>0$.
More generally when \mbox{$0<\aa\le 2$},  the result is related to
Theorem 3.3 of \cite{petrov1995limit} which gives necessary and
sufficient conditions for convergence of sums of independent
triangular arrays to a given infinitely divisible distribution. In
particular, the conditions of Theorem \ref{theo:whitecase} above
imply the {\it infinite smallness} condition (cf. Eq. (3.2) in
\cite{petrov1995limit}). However, it is unclear how to obtain
Theorem \ref{theo:whitecase} from \cite[Thm 3.3]{petrov1995limit} in
a manner simpler than the proof of Theorem \ref{theo:whitecase}
provided below.
\end{enumerate}

\section{Linear fractional stable random measures}\label{sec:lfsm}

To simplify matters, in this section we will restrict our attention to the one-dimensional case $E=\R^1$ equipped with Lebesgue measure.
For higher dimensions, see the first remark following Corollary \ref{theo:fraccase}.
Also, in this section we assume that $1<\aa\le 2$.

\subsection{Fractional integro-differentiation and L-FSM integrals}
In this subsection we define the stochastic integration of suitable functions with respect to different L-FSMs in terms of
 stable random measures which are not independently scattered.  This is achieved using fractional integrals and derivatives.  The intuition behind our definition
is based on two facts. The first is that fractional integrals and derivatives can be realized using convolutions, and the second is that convolutions are moving averages.

The practice of using fractional integro-differentiation for analogous integrals with respect to FBM was initiated in \cite{decreusefond1999stochastic}, 
and was subsequently used in \cite{pipiras2000integration}. We note that the $M$ operator, which is fundamental in the development 
of the so-called WIS integral (\cite{elliot2003}), 
is simply  fractional integro-differentiation in disguise.

Before we define our integral, let us review some preliminaries concerning fractional integro-differentiation.
The Riemann-Liouville integrals are defined, for  $ f \in L^p(\R), 1\le p<1/ \delta$ and $0< \delta<1$, by
\bea\label{fracintegral}
(I^ \delta_{+} f )(x)&:=& \frac{1}{\Gamma( \delta)} \int_{-\ff}^x \frac{ f (t)}{(x-t)^{1- \delta}} \,dt \\
&=& \frac{1}{\Gamma( \delta)} \int_\R \frac{ f (t)}{(x-t)_+^{1- \delta}} \,dt\nn\\
(I^ \delta_{-} f )(x)&:=& \frac{1}{\Gamma( \delta)} \int_{\R} \frac{ f (t)}{(x-t)_-^{1- \delta}} \,dt
\eea
Our notation is consistent with the standard reference on this topic, \cite[Sec. 5.1]{samko1987integrals}, where some basic properties of the above can be found.
For example, if $ f $ is in the  Schwartz space and we allow for $ \delta\in\N$,
then \eqref{fracintegral} gives the usual integral, as can be seen by Cauchy's formula for repeated integration:
\bea
\int_{-\ff}^x\int_{-\ff}^{t_{n}}\cdots\int_{-\ff}^{t_2}  f (t_1)\, dt_1 \cdots dt_{n-1}dt_n = \frac{1}{(n-1)!}\int_{-\ff}^x(x-t)^{n-1} f (t) \,dt.\nn
\eea
Also, the above fractional integrals have the semigroup property for  $ \delta,\gamma>0$ and $ \delta+\gamma<1$:
\beq\nn
I^ \delta_{\pm}I^\gamma_{\pm} f  = I^{ \delta+\gamma}_{\pm} f .
\ee
For sufficiently nice $ f $, this semigroup property extends to all $ \delta,\gamma>0$.

Suppose $f\in \CC^1 $ and  $f'\in L^{1}$. These are sufficient conditions for the following
 Riemann-Liouville derivatives to exist:
\bea\label{fracderiv}
(\DD^\bb_{+} f )(x)&:=& \frac{1}{\Gamma(1-\bb)} \frac{d}{dx}\int_\R \frac{ f (t)}{(x-t)_+^{\bb}} \,dt\\
(\DD^\bb_{-} f )(x)&:=& \frac{1}{\Gamma(1-\bb)}\frac{d}{dx} \int_{\R} \frac{ f (t)}{(x-t)_-^{\bb}} \,dt. \nn
\eea
If $ f \in L^1$, it is known that the inversion $\DD_{\pm}^\bb I^\bb_\pm  f = f $ holds.

Bringing the derivative inside the integral in \eqref{fracderiv},
the Riemann-Liouville integrals and derivatives of $ f $ can be seen as convolutions of $f$ and $ f'$ with the family
\beq\label{def:w}
w_{a,b}(x)=w_{a,b}^{(\bb)}(x):=ax_-^{-\bb} + bx_+^{-\bb}, \quad \bb\in(0,1)
\ee
where we have set $\beta=1-\delta$.

\begin{definition}[Linear fractional stable random measures]\label{maindef}
Fix $1<\aa\le 2$ and $a,b\ge 0$.
\begin{enumerate}
\item

If $\bb\in(1/\aa,1)$, let $f\in L^1\cap L^\aa $.

The {\it linear fractional random measure} with {\it long range dependence} is defined by
\bea\label{posintegral}
 M_{\alpha,H} [f]&:=&
M_\aa[aI^\bb_- f+b{I^\bb_+ f}]=M_{\alpha}[f\ast w_{a,b}^{(\bb)}]
\eea
where the Hurst parameter is given by $H=1+1/\aa-\bb$.
\item
If $\bb\in(0,1/\aa)$, let $f\in \CC^1 $ and $f'\in L^1\cap L^\alpha $.

The {\it linear fractional random measure} with {\it anti-persistence} is defined by

\bea\label{negintegral}
 M_{\alpha,H} [f]&:=&
M_\aa[a\DD^\bb_- f+b{\DD^\bb_+ f}]=M_{\alpha}[\frac{d}{dx}(f\ast w_{a,b}^{(\bb)})]
\eea
where $H=1/\aa-\bb$.
\end{enumerate}
\end{definition}

It is not hard to check that $I^\bb_\pm f$ and $\DD^\bb_\pm f$ are in $L^\aa$ so that \eqref{posintegral} and \eqref{negintegral} are well-defined: to see this, 
split $w_{a,b}^{(\bb)}$ into an $L^1$ and $L^\aa$ function using
$1_{[-\epsilon,\epsilon]} +1_{[-\epsilon,\epsilon]^c}$ and apply Young's convolution inequality,
\beq\label{Young}
\| f\ast g\|_r \le \|f\|_p \|g\|_q \quad\text{ for }\, \frac{1}{p}+\frac{1}{q}=\frac{1}{r}+1.
\ee
In fact, one can slightly improve the condition for \eqref{posintegral} to $f\in L^{\alpha(1+\alpha(1-\bb)+\epsilon)^{-1}}\cap L^{\alpha(1+\alpha(1-\bb)-\epsilon)^{-1}}$
for some $\eps>0$,
and a similar condition can be found for \eqref{negintegral} and $f'$.  However, in the interest of simple notation, we will not utilize these meager improvements in the sequel.
Let us remark that the fact that \eqref{posintegral} is well-defined coincides with Proposition 3.2 in \cite{pipiras2000integration} for the Gaussian case.

By the linearity of convolutions, it follows that $M_{\alpha,H}$ is a \sas random measure.
Also note that $M_{\alpha,H} [f]$ can be interpreted as the integral of $f$ with respect to a L-FSM in which case we write
\beq
M_{\alpha,H} [f]\equiv \int_{\R} f\, dL_{\aa,H}.
\ee

To check consistency with \eqref{LFSMkernel}, we see that
\bea
&&M_{\alpha}[1_{[0,t]}\ast w_{a,b}]\\
\nn&=&\int_{\bbR}\left(\int_{\bbR}1_{[0,t]}(y)\(a(x-y)_-^{-\bb}+b(x-y)_+^{-\bb}\)dy\right)M_\alpha(dx)\\
&=&\nn\int_{\bbR}\left(\int_{\bbR}1_{[0,t]}(y)\(a(y-x)_+^{-\bb}+b(y-x)_-^{-\bb}\)dy\right)M_\alpha(dx)\\
&=& \nn\int_{\bbR} f_t^{a,b}(x) \,M_\alpha(dx)
\eea
and
\bea
&&M_{\alpha}[\frac{d}{dx}(1_{[0,t]}\ast w_{a,b})(x)]\\
&=&\int_{\bbR}\frac{d}{dx}\int_{\bbR}1_{[0,t]}(y)\(a(x-y)_-^{-\bb}+b(x-y)_+^{-\bb}\)dy\,M_\alpha(dx) \nn\\
&=& \nn\int_{\bbR} f_t^{a,b}(x) \,M_\alpha(dx).
\eea

When $f\in \CC^1 $ and that $f'\in L^{1}$, one can rewrite \eqref{fracderiv} as
\bea
&&\frac{1}{\Gamma(1-\bb)} \int_\R \frac{ f' (x-t)}{t_+^{\bb}} \,dt\nn\\
&=&\frac{\bb}{\Gamma(1-\bb)} \int_0^\ff f' (x-t) \int_t^\ff \frac{1}{s^{1+\bb}} \,ds\\
&=&\frac{\bb}{\Gamma(1-\bb)} \int_0^\ff \frac{f (x) - f(x-s)}{s^{1+\bb}} \,ds.\nn
\eea
The right-hand side above is slightly more general then \eqref{fracderiv} and is called the  {\it Marchaud derivative}. This is the fractional derivative used
 in \cite{pipiras2000integration}, however, to keep a unified notation in our approximations of the next subsection, we will continue with the Riemann-Liouville derivative.

\subsection{Discrete approximations of linear fractional stable measures}

Let $1<\alpha\leq 2$, and consider the stationary moving average process $(\hat\xi_k)_{k\in\bbZ}$ obtained by ``linearly filtering" an  i.i.d. sequence $(\xi_l)_{l\in\bbZ}$
in the domain of normal attraction of  $\cS_\aa$:
\begin{equation}\label{eq:defhatxi}
\hat\xi_k:=\sum_{l\in\bbZ}\B_{k-l}\xi_l.
\end{equation}
Lemma \ref{lem1} shows that if
$\B\in\ell^\alpha$, the series \eqref{eq:defhatxi} converges almost
surely.

Recall the definition of $f^h$ from \eqref{def:superh} and denote the inversion of a sequence by $\check\B_k:=\B_{-k}$
A first stab at approximating a L-FSM integral of $f$, as defined in the previous subsection, might be to mimic
\eqref{eq:series} and look at  $\sum_{k\in\bbZ} f^h_k\hat\xi_{k}$ for appropriate filters $v$ (which would also depend on $h$). 
This is, for example, the approach of \cite{kasahara1988weighted}
and \cite{maejima2008limit}.
Then formally,
\begin{eqnarray}
\label{convolution convergence} \sum_{k\in\bbZ}
f^h_k\hat\xi_{k}&=&\sum_{k,l\in\bbZ}
f^h_k\B_{k-l}\xi_l \\&=&\sum_{l\in\bbZ}
\(f^h\ast\check\B\)_l\xi_l <\ff.\nn
\end{eqnarray}

However, in view of the right-hand side above, it is easier and perhaps more natural to first convolve $f$ with $w_{a,b}=w_{a,b}^{(\beta)}$ and then 
approximate the convolution on a lattice with side-length $h$.  In particular, for $w_{a,b}$ corresponding to $H\in(1/\aa,1)$, define
\begin{equation}\label{eq:series2}
M^h_{\alpha,H}[f]:=\sum_{k\in\bbZ} \(f\ast w_{a,b}\)^h_k \xi_{k},\quad \quad f\in L^1\cap L^{\aa}(\R)
\end{equation}
where the sequence $\(f\ast w_{a,b}\)^h$ is defined according to \eqref{def:superh}. Alternatively, for $w_{a,b}$ corresponding to $H\in(0,1/\aa)$, define
\begin{equation}\label{eq:series2n}
M^h_{\alpha,H}[f]:=\sum_{k\in\bbZ} \(f'\ast w_{a,b}\)^h_k\xi_{k},\quad \quad f\in \CC^1(\R), f'\in L^1\cap L^{\aa}(\R).
\end{equation}
By \eqref{Young} and the remark above it,  $f\ast w_{a,b}\in L^\aa(\R)$.  Thus one obtains, from a direct application of Theorem \ref{maineq2prop}, the following corollary:

\begin{corollary}\label{theo:fraccase}
Fix  $1<\alpha\leq 2$. Suppose that for all $t$ in an index set $T$, $f_t\in L^1\cap L^\aa$  when $H\in(1/\alpha,1)$ 
 or  $f_t\in \CC^1, f'_t\in L^1\cap L^\aa$ when $H\in(0, 1/\alpha)$. Then as $h\to 0$:
\beq\label{theo:fraccase equation}
M^h_{\alpha,H}[f_t]\stackrel{fdd}\longrightarrow  M_{\alpha,H} [f_t].
\ee
\end{corollary}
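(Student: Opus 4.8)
\textbf{Proof plan for Corollary \ref{theo:fraccase}.}

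The plan is to reduce the statement directly to Theorem \ref{maineq2prop} by identifying $M^h_{\alpha,H}[f_t]$ with $M^h_\alpha[g_t]$ for an appropriate family $g_t \in L^\alpha(\R)$ and checking that the limit agrees with $M_{\alpha,H}[f_t]$. Concretely, in the long-range dependence case $H \in (1/\alpha,1)$, set $g_t := f_t \ast w_{a,b}^{(\beta)}$ with $\beta = 1 + 1/\alpha - H \in (1/\alpha, 1)$; in the anti-persistent case $H \in (0,1/\alpha)$, set $g_t := f_t' \ast w_{a,b}^{(\beta)}$ with $\beta = 1/\alpha - H \in (0,1/\alpha)$. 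Then by definition \eqref{eq:series2} resp.\ \eqref{eq:series2n}, one has $M^h_{\alpha,H}[f_t] = \sum_{k\in\Z} g_t^h(k)\,\xi_k = M^h_\alpha[g_t]$, exactly in the form \eqref{eq:series}.

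Next I would verify the hypotheses of Theorem \ref{maineq2prop} for the family $(g_t)_{t\in T}$: since $1 < \alpha \le 2$, it suffices to show $g_t \in L^\alpha(\R)$ for each $t$. This is precisely the content of the remark following \eqref{Young}: one splits $w_{a,b}^{(\beta)} = w_{a,b}^{(\beta)}\ind_{[-\epsilon,\epsilon]} + w_{a,b}^{(\beta)}\ind_{[-\epsilon,\epsilon]^c}$, observes that the first piece lies in $L^1$ (since $\beta < 1$) and the second in $L^\alpha$ (since $\alpha\beta > 1$), and then applies Young's inequality \eqref{Young} with the pairing $f_t \ast (L^1\text{-part}) \in L^\alpha$ (taking $p=\alpha$, $q=1$, $r=\alpha$) and $f_t \ast (L^\alpha\text{-part}) \in L^\alpha$ (taking $p=1$, $q=\alpha$, $r=\alpha$), using $f_t \in L^1 \cap L^\alpha$ in the first case. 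The anti-persistent case is identical with $f_t$ replaced by $f_t'$ and $\beta<1/\alpha$. Hence Theorem \ref{maineq2prop} applies and gives
\[
M^h_{\alpha,H}[f_t] = M^h_\alpha[g_t] \stackrel{fdd}{\longrightarrow} M_\alpha[g_t].
\]

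Finally I would identify $M_\alpha[g_t]$ with $M_{\alpha,H}[f_t]$. In the long-range case this is immediate from \eqref{posintegral}, which defines $M_{\alpha,H}[f_t] = M_\alpha[f_t \ast w_{a,b}^{(\beta)}] = M_\alpha[g_t]$; in the anti-persistent case it follows from \eqref{negintegral}, which defines $M_{\alpha,H}[f_t] = M_\alpha[\tfrac{d}{dx}(f_t\ast w_{a,b}^{(\beta)})] = M_\alpha[f_t' \ast w_{a,b}^{(\beta)}] = M_\alpha[g_t]$, the middle equality being differentiation under the convolution, valid since $f_t \in \CC^1$ with $f_t' \in L^1$. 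Because $\stackrel{fdd}{\longrightarrow}$ is by definition convergence in distribution for every linear combination $\theta_1 f_{t_1} + \cdots + \theta_n f_{t_n}$, and both $f \mapsto g$ and $M_\alpha[\cdot]$ are linear, the finite-dimensional convergence transfers verbatim, completing the proof.

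The only genuine content beyond bookkeeping is the $L^\alpha$-membership of the convolutions $f_t \ast w_{a,b}^{(\beta)}$ (and $f_t' \ast w_{a,b}^{(\beta)}$), i.e.\ the Young-inequality splitting argument; everything else is unwinding definitions and invoking linearity. Since that $L^\alpha$ bound has already been asserted in the text following \eqref{Young} (and noted to coincide with Proposition 3.2 of \cite{pipiras2000integration} in the Gaussian case), there is no real obstacle — the corollary is essentially a specialization of Theorem \ref{maineq2prop}, and the proof is short.
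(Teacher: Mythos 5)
Your overall route is exactly the paper's: rewrite $M^h_{\alpha,H}[f_t]$ as $M^h_\alpha[g_t]$ with $g_t=f_t\ast w^{(\beta)}_{a,b}$ (resp.\ $g_t=f_t'\ast w^{(\beta)}_{a,b}$), check $g_t\in L^\alpha$, and invoke Theorem \ref{maineq2prop}; the paper's proof of the corollary is literally that one sentence, resting on the remark after \eqref{Young}. For the long-range case $H\in(1/\alpha,1)$, i.e.\ $\beta\in(1/\alpha,1)$, your splitting is correct: the near-origin piece of $w^{(\beta)}_{a,b}$ is in $L^1$ because $\beta<1$, and the tail piece is in $L^\alpha$ because $\alpha\beta>1$.

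The gap is in the anti-persistent case, and as written your argument is internally inconsistent: you justify the tail piece being in $L^\alpha$ by the condition $\alpha\beta>1$, and then declare the case $H\in(0,1/\alpha)$ ``identical with $\beta<1/\alpha$.'' But $\beta<1/\alpha$ means $\alpha\beta<1$, so $|x|^{-\beta}\ind_{[-\eps,\eps]^c}$ is \emph{not} in $L^\alpha$ (it lies only in $L^q$ for $q>1/\beta>\alpha$), and Young's inequality with $f_t'\in L^1\cap L^\alpha$ cannot then yield an $L^\alpha$ bound for $f_t'\ast\bigl(|x|^{-\beta}\ind_{[-\eps,\eps]^c}\bigr)$: the exponent bookkeeping $1/p+1/q=1/\alpha+1$ forces $p<1$. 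So the ``identical'' step fails. To be fair, the paper's own remark after \eqref{Young} is equally cavalier about this half of the statement; a correct treatment requires a different argument for the large-scale part of the kernel --- for instance passing to the Marchaud form, controlling the small increments via $f'\in L^\alpha$ (which your splitting does handle), but then bounding $\int_{s>1}|f(x)-f(x-s)|s^{-1-\beta}\,ds$ in $L^\alpha$, which needs decay of $f$ itself and does not follow from $f\in\CC^1$, $f'\in L^1\cap L^\alpha$ alone. You should either supply that argument (possibly under a strengthened hypothesis such as $f\in L^\alpha$) or at least not assert that the two cases are disposed of by the same splitting.
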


\noindent
{\bf Remarks:}
\begin{enumerate}
\item It is not hard to extend the $H>1/\aa$ case to stable random measures on $\R^d$ by generalizing the two fixed values $a,b\ge 0$
 (representing the negative and positive directions) to a function on the unit sphere $S^{d-1}\subset \R^d$. 
  However, one then has to specify what is meant by ``stationary increments'' as there are different possibilities for $d>1$.
\item Extending the $H<1/\aa$ case to higher dimensions is more difficult. One possibility is to consider
 the Marchaud derivative in place of the Riemann-Liouville derivative (see also the next remark).
\item When $H>1/\aa$, Eq. \eqref{theo:fraccase equation} has been shown by various authors in the case where $f_t=1_{[0,t]}$ (see \cite{kasahara1988weighted} and its references).
 However,
when $H<1/\aa$, to our knowledge, even the case $f_t=1_{[0,t]}$ has not appeared in the literature.  It is, however, related to the normalization suggested in Theorem 5.2 of \cite{kasahara1988weighted} which can be thought of as a discrete Marchaud derivative in the case where $f_t=1_{[0,t]}$.
\end{enumerate}

\section{Proofs}\label{sec:proof}
Before delving into the proofs, let us recall some facts about the
domain of  attraction of a stable distribution.  
We write
$f(x)\sim g(x)$ as $x\to c$ if $\lim_{x\to c}f(x)/g(x)=1$.
For $\alpha\in(0,2)$, the following statements
are  equivalent (see \cite[Theorem 1]{geluk1997stable} with
$p=1/2$):
\begin{itemize}
\item[i)] $\xi$ is in the domain of attraction of $\cS_\alpha$ (i.e. Eq. \eqref{eq:xi1} holds);
\item[ii)] the tail function $t\mapsto \bbP(|\xi|\geq t)$ is regularly varying at infinity with index $-\alpha$ and $\bbP(\xi\leq -t)\sim \bbP(\xi\geq t)$ as $t\to \infty$;
\item[iii)] the characteristic function $\lambda(\theta)=\bbE\left[e^{i\theta\xi}\right]$ satisfies
\begin{itemize}
\item[-] $\theta\mapsto 1- \mathrm{Re}(\lambda(\theta))$ is regularly varying at $0$ with index $\alpha$,
\item[-] for all $x\neq 0$,
\[
\lim_{\theta\to 0} \frac{x\mathrm{Im}(\lambda(\theta x))-\mathrm{Im}(\lambda(\theta ))}{1- \mathrm{Re}(\lambda(\theta))}=0.
\]
\end{itemize}
\end{itemize}
Moreover, if conditions (i)-(iii) hold, then
\[
1- \mathrm{Re}(\lambda(\theta))\sim c_\alpha \bbP(|\xi|\geq 1/\theta)\quad \mbox{with}\ c_\alpha=\int_0^\infty x^{-\alpha}\sin x\, dx
\]
as $\theta\to 0$ and also
\[
\mathrm{Im}(\lambda(\theta))=\theta \int_0^{1/\theta}(\bbP(\xi\geq s)-\bbP(\xi\leq-s))ds+ o(\bbP(|\xi|\geq 1/\theta)).
\]
Also, Remark 3 of \cite{geluk1997stable} shows that one may choose the normalization constants  so that
\[
\lim_{n\to \infty} n\Big(1- \mathrm{Re}(\lambda(1/a_n))\Big) =1\quad \mbox{and}\quad b_n=n\mathrm{Im}(\lambda(1/a_n)).
\]

Recall from \eqref{eq:xi2} that in the present framework, we have assumed  $a_n=n^{1/\alpha}$ and $b_n=0$.
Thus,
\begin{equation}\label{eq:heavy-tail}
\bbP( \xi\geq t) \sim \bbP( \xi\leq -t)\sim \frac{1}{2c_\alpha} t^{-\alpha} \quad \mbox{as}\ \theta\to 0.
\end{equation}
and
\begin{equation}\label{eq:fcstable2}
\lambda(\theta)=1-|\theta|^\alpha+o(|\theta|^\alpha)=\lambda_\alpha(\theta)+o(|\theta|^\alpha)\quad \mbox{as}\ \theta\to 0.
\end{equation}
where $\lambda_\alpha$ is defined in Eq. \eqref{eq:fcstable1}. Furthermore, (\ref{eq:heavy-tail}) implies that there exists $C>0$ such that for any $s>0$
\begin{equation}\label{eq:stablestim2}
{\rm Var}[\xi\ind_{\{|\xi|\leq s\}}]\leq Cs^{2-\alpha}\quad {\rm and} \quad  \E[|\xi|\ind_{\{|\xi|\leq s\}}]\leq Cs^{1-\alpha}.
\end{equation}

\subsection{Proof of Theorem \ref{theo:whitecase}}

We begin with a lemma which shows the $\ell^\alpha$ is the right space for the sequence $u$.
\begin{lemma}\label{lem1}
If $(\xi_k)_{k\in\N}$ is an i.i.d. sequence in the domain of normal attraction of $\cS_\aa $, then
$\sum_{k}u_k\xi_{k}<\ff$ if and only if $u\in\ell^\alpha$.
\end{lemma}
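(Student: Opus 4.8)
The plan is to reduce the statement to the behaviour of the characteristic function $\lambda$ of $\xi$ near the origin, as recorded in \eqref{eq:fcstable2}, together with the classical fact that for a series of \emph{independent} summands, almost sure convergence, convergence in probability and convergence in distribution are equivalent. Set $S_N=\sum_{k=1}^N u_k\xi_k$; its characteristic function is the finite product $\phi_N(\theta)=\prod_{k=1}^N\lambda(u_k\theta)$, so everything hinges on when this product converges as $N\to\infty$. Two preliminary observations: by \eqref{eq:fcstable2} one has $|\lambda(\theta)|=1-|\theta|^\alpha+o(|\theta|^\alpha)$ as $\theta\to0$, so there is $\delta>0$ with $e^{-2|\theta|^\alpha}\le|\lambda(\theta)|\le e^{-\frac12|\theta|^\alpha}$ and $|\log\lambda(\theta)|\le 2|\theta|^\alpha$ for $0<|\theta|\le\delta$; and in either direction one may assume $u_k\to0$, since this is automatic when $u\in\ell^\alpha$ and is forced when $\sum_k u_k\xi_k$ converges a.s.\ (then $u_k\xi_k\to0$ in probability while $\xi$ is non-degenerate). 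In particular, for fixed $\theta$ only finitely many factors have $|u_k\theta|>\delta$, so below I work on the tail where $|u_k\theta|\le\delta$ and carry the remaining finitely many factors along as a bounded prefactor.

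For the ``if'' direction, assume $u\in\ell^\alpha$. Then $\sum_k|\log\lambda(u_k\theta)|\le 2|\theta|^\alpha\sum_k|u_k|^\alpha<\infty$, and the tail beyond index $K$ is at most $2|\theta|^\alpha\sum_{k\ge K}|u_k|^\alpha$, which is $o(1)$ as $K\to\infty$ uniformly for $\theta$ in any bounded set; hence $\phi_N\to\phi$ uniformly on compacts, so $\phi$ is continuous with $\phi(0)=1$. By L\'evy's continuity theorem $S_N\Rightarrow S$ for some random variable $S$, and by the equivalence recalled above $\sum_k u_k\xi_k$ converges almost surely.

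For the ``only if'' direction, assume $\sum_k u_k\xi_k$ converges a.s.\ to $S$; then $S_N\Rightarrow S$, so $\phi_N\to\phi_S$ pointwise, where $\phi_S$ is a genuine characteristic function and hence $|\phi_S|>\tfrac12$ on a neighbourhood of $0$. Suppose, for contradiction, that $\sum_k|u_k|^\alpha=\infty$, and fix $\theta_0\ne0$ in that neighbourhood. Since $u_k\to0$ there is $K_0$ with $|u_k\theta_0|\le\delta$ for $k\ge K_0$, so for $N\ge K_0$, $|\phi_N(\theta_0)|\le C_0\exp\!\big(-\tfrac12|\theta_0|^\alpha\sum_{K_0\le k\le N}|u_k|^\alpha\big)\to0$. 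This forces $\phi_S(\theta_0)=0$, a contradiction, so $u\in\ell^\alpha$.

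There is no deep obstacle, but two points deserve care. First, applying L\'evy's continuity theorem in the ``if'' direction requires genuine continuity of the limit $\phi$ at $0$, which is precisely why the tail of the infinite product must be controlled uniformly in $\theta$ near the origin---via the $\ell^\alpha$ bound---rather than only pointwise. Second, the upgrade from convergence in law of $S_N$ to almost sure convergence of the series rests on the classical equivalence of convergence modes for series of independent summands; without it one obtains only tightness of the laws. A minor nuisance, handled as indicated, is that $\lambda$ may vanish at points away from $0$, so all product estimates are carried out on the tail where $|u_k\theta|\le\delta$.
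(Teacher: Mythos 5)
Your proof is correct, but it takes a genuinely different route from the paper's. The paper proves Lemma \ref{lem1} by a direct application of Kolmogorov's three-series theorem: the tail asymptotic \eqref{eq:heavy-tail} gives $\bbP[|u_k\xi_k|>s]\sim C|u_k|^\alpha s^{-\alpha}$, so the first series converges iff $u\in\ell^\alpha$ (which immediately yields the ``only if'' direction), and the truncated moment bounds \eqref{eq:stablestim2} dispose of the remaining two series; the case $\alpha=2$ is declared standard and omitted. You instead work entirely on the Fourier side, using the expansion \eqref{eq:fcstable2} of $\lambda$ near the origin to control the infinite product $\prod_k\lambda(u_k\theta)$, invoking L\'evy's continuity theorem, and then upgrading convergence in law of the partial sums to almost sure convergence via the L\'evy--It\^o--Nisio equivalence for series of independent summands. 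Both routes rest on results of comparable depth (the three-series theorem versus the equivalence-of-convergence-modes theorem), and you correctly flag the two delicate points in yours: the uniform-on-compacts control of the tail of the product needed for continuity of the limit at $0$, and the fact that $u_k\to0$ must be extracted before the factorwise bound $|\lambda(u_k\theta)|\le e^{-\frac12|u_k\theta|^\alpha}$ can be applied to the tail. Your approach has the minor advantages of treating $\alpha=2$ on the same footing as $\alpha<2$ and of reusing exactly the characteristic-function estimates that the paper deploys anyway in the proof of Theorem \ref{theo:whitecase}; the paper's approach gets the ``only if'' direction for free from the divergence of the first Kolmogorov series and avoids any appeal to the equivalence of convergence modes.
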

\begin{proof}[Proof of Lemma \ref{lem1}]
The case $\alpha=2$ is standard and omitted. Consider  $\alpha\in (0,2)$.
Recall Kolmogorov's Three-series Theorem: $\sum_{k} u_k\xi_k$  converges a.s.  if and only if for any $s>0$, the following
three series converge
\[
\sum_{k\in \N} \bbP\left[|u_k\xi_k |>s\right],\quad
\sum_{k\in \N} {\rm Var}\left [u_k\xi_k \ind_{\{|u_k\xi_k |\leq s\}}\right],
\quad
\sum_{k\in \N} \E\left[u_k\xi_k\ind_{\{|u_k\xi_k |\leq s\}}\right] .
\]
Eq. (\ref{eq:heavy-tail}) implies
\begin{equation}\nn
\bbP\left[|u_k\xi_k |>s\right]\sim C|u_k|^\alpha s^{-\alpha}
\end{equation}
and hence the first series converges if and only if $u\in \ell^\alpha$.
If $u\in\ell^\alpha$, then (\ref{eq:stablestim2}) implies the convergence of the third series since
$$|u_k| \E[|\xi_k| 1_{\{|\xi_k|<s/|u_k|\}}]  \leq  |u_k| C(s/|u_k|)^{\alpha-1} = Cs^{\alpha-1} |u_k|^\alpha.$$
The convergence of the second series is obvious.
\end{proof}

\begin{proof}[Proof of Theorem \ref{theo:whitecase}]
If $\eta_{k,j}$ are i.i.d. $\cS_\aa$ random variables, then for any fixed $j$ the above lemma allows us to write
\beq\label{eq:fc-stableintegral}
\E\exp\{i\th \sum_{k\in\N} u^{(j)}_k\eta_{k,j}\}=\prod_{k\in\N} \lambda_\aa\left( u^{(j)}_k\th\right)=
\exp\lc -\left|\|u^{(j)}\|_{\ell_\aa}\th\right|^\aa\rc
\ee
and
\begin{equation}\label{eq:fc-noise}
\E\exp\{i\theta \sum_{k\in\N} u^{(j)}_k\xi_{k,j}\}=\prod_{k\in\N} \lambda\left(   u^{(j)}_k\th \right).
\end{equation}
Note that since $\|u\|_{\ell_\aa}^\aa:=\sum_{k\in\N} |u(k)|^\aa$ absolutely converges, the order in which the summation and  products above are taken is irrelevant.

It suffices to show that as $ j\to\ff$,
\begin{equation}\label{eq:diff}
\prod_{k\in\N} \lambda\left(   u^{(j)}_k\th \right)=\prod_{k\in\N} \lambda_\aa\left(   u^{(j)}_k\th \right)+o(1).
\end{equation}
We fix $j$ and estimate the difference of the above products using the following fact : if $(z_i)_{i\in I}$ and $(z_i')_{i\in I}$ are two families of complex numbers with moduli 
no greater than $1$ and such that the products $\prod_{i\in I}z_i$ and $\prod_{i\in I}z'_i$ converge, then
\beq\label{standardinequality}
\left|\prod_{i\in I}z'_i -\prod_{i\in I}z_i\right| \leq \sum_{i\in I} \left|z'_i - z_i\right|.
\ee
We therefore have
\begin{eqnarray}
\left|\prod_{k\in\N} \lambda\left(   u^{(j)}_k\th \right)- \prod_{k\in\N} \lambda_\aa\left(  u^{(j)}_k\th \right)\ \right|
 \leq \  \sum_{k\in\N}  \ \left|\ \lambda \left(u^{(j)}_k\th \right)-\lambda_\alpha \left(u^{(j)}_k\th \right)\  \right|\label{eq:diff1}.
\end{eqnarray}
Equation (\ref{eq:fcstable2}) implies\footnote{We have assume $\alpha\in(0,2)$ for Eq. \eqref{eq:fcstable2}, but for $\alpha=2$ it is well-known.} that the 
function $g$ defined by $g(0)=0$ and
$$
g(u)=|u|^{-\alpha}\left|\lambda(u)-\lambda_\aa(u)\right|\ \ ,\ \ u\neq 0,
$$
is continuous and bounded and for any $k\in\N$, we have
$$
\left| \lambda \left(  u^{(j)}_k\th  \right)-  \lambda_\aa \left(  u^{(j)}_k\th  \right) \right| = g( u^{(j)}_k\th  )| u^{(j)}_k\th  |^\alpha .
$$
In order to obtain a uniform estimate on the above, define the function $\tilde g:\R^+\to\R^+$ by
$$
\tilde g(v) :=\sup_{|u|\leq v} |g(u)|.
$$
Note that $\tilde g$ is continuous, bounded and vanishes at $0$, and that for any $k\in \N$ such that $| u^{(j)}_k\th  |\leq \varepsilon$,
\begin{equation}\label{eq:diff2}
\left| \lambda \left(  u^{(j)}_k\th  \right)-  \lambda_\aa \left(  u^{(j)}_k\th  \right) \right|\leq \tilde g(\varepsilon)| u^{(j)}_k\th  |^\alpha .
\end{equation}
Let $\varepsilon>0$. Equations (\ref{eq:diff1}) and (\ref{eq:diff2}) together yield
\begin{eqnarray*}
& &\left|\prod_{k\in\N} \lambda\left(   u^{(j)}_k\th \right)- \prod_{k\in\N} \lambda_\aa\left(  u^{(j)}_k\th \right)\ \right|\\
&\leq& \  \tilde g(\varepsilon)\sum_{k\in\N}\left| u^{(j)}_k\th   \right|^\alpha\ind_{\{| u^{(j)}_k\th  |\leq\varepsilon\}} + 2\sum_{k\in\N}\ind_{\{| u^{(j)}_k\th  |>\varepsilon\}}.
\end{eqnarray*}
Now, by the continuity of $\tilde g$ at  $0$,  $\tilde g(\varepsilon)$ is small when $\varepsilon$ is small. Eq. \eqref{eq:diff} follows since  
$\lim_{j\to\ff} \|u^{(j)}\|_{\ell^\infty}=0$ implies
$\sum_{k\in\N}\ind_{\{| u^{(j)}_k\th  |>\varepsilon\}}\to 0$ as $j\to\ff$.
\end{proof}

\subsection{Proof of Theorem \ref{maineq2prop}}
Let $\lfloor\cdot\rfloor$ denote the floor function applied to each coordinate of $\R^d$.
Define $\fh:\R^d\mapsto\R$ to be a
piece-wise constant function approximating $f\in L^1_{\text{loc}}(\R^d)$:
\bea\label{def:fh}
\fh (x)&:=&  \int_{h(\lfloor h^{-1}x\rfloor+I^d)}h^{-d} f(y) \, dy\\
&=&\int_{h(k+I^d)} h^{-d} f(y) \,dy, \ \, \quad\text{for } x\in h(k+I^d)\nn\\
&=& h^{-d} f^h(k), \quad\quad\quad\quad\quad\quad\text{for } x\in h(k+I^d).\nn
\eea
Note that \beq\label{normsequal}
\|f^h\|_{\ell^\alpha}=\|\fh\|_{L^\alpha}.
\ee

\begin{lemma} \label{laa lemma}For $\aa\in[1,2]$, suppose $f\in L^\aa(\R^d)$. Then as $h\to 0$,
$$\lim_{h\to 0}\| \fh-f\|_{L^\aa}=0.$$
\end{lemma}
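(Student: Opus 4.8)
The plan is to run the standard density argument for averaging operators. Write $T_hf:=\fh$ for the operator of \eqref{def:fh}, which replaces $f$ by its mean over the grid cell $h(k+I^d)$ containing $x$; equivalently $T_h$ is the conditional expectation of $f$ given the $\sigma$-algebra generated by those cells. It suffices to establish two facts: (a) $T_h$ is a contraction on $L^\aa(\R^d)$, \emph{uniformly in} $h$, and (b) $\|T_hg-g\|_{L^\aa}\to0$ as $h\to0$ for every $g\in C_c(\R^d)$ (continuous with compact support). I would avoid invoking martingale convergence here, since as $h\to0$ through arbitrary positive reals the cells are not nested; the density argument sidesteps this.

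For (a): fix $x$ in a cell $C=h(k+I^d)$; since $|C|=h^d$, the measure $h^{-d}\ind_C(y)\,dy$ is a probability measure, so Jensen's inequality — this is the one place the hypothesis $\aa\ge1$ is used — gives
\[
|\fh(x)|^\aa=\Bigl|\,h^{-d}\!\int_C f(y)\,dy\,\Bigr|^\aa\le h^{-d}\!\int_C |f(y)|^\aa\,dy .
\]
Integrating over $x\in C$, on which $\fh$ is constant, yields $\int_C|\fh|^\aa\le\int_C|f|^\aa$; summing over all cells gives $\|\fh\|_{L^\aa}\le\|f\|_{L^\aa}$ for every $h>0$. (The failure of this contraction for $\aa<1$ is exactly why Theorem \ref{maineq2prop} treats that range under the stronger hypothesis $f_t\in L^{\aa-\eps}\cap L^1$.)

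For (b): take $g\in C_c(\R^d)$ supported in a cube $Q_0$, and set $\om_g(\dd):=\sup_{|u-v|\le\dd}|g(u)-g(v)|$, so $\om_g(\dd)\to0$ as $\dd\downarrow0$ by uniform continuity. For $x$ in a cell $C$ (of diameter $\sqrt d\,h$) one has $|T_hg(x)-g(x)|=\bigl|h^{-d}\int_C(g(y)-g(x))\,dy\bigr|\le\om_g(\sqrt d\,h)$, and for $h\le1$ the function $T_hg-g$ is supported in the fixed compact set $Q:=\{x:\mathrm{dist}(x,Q_0)\le\sqrt d\}$. Hence $\|T_hg-g\|_{L^\aa}^\aa\le|Q|\,\om_g(\sqrt d\,h)^\aa\to0$.

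To conclude, I would use the $\eps/3$ trick: given $\eps>0$, pick $g\in C_c(\R^d)$ with $\|f-g\|_{L^\aa}<\eps$ (possible since $C_c$ is dense in $L^\aa$, $\aa<\ff$), and apply (a) to $f-g$ (using linearity of $T_h$) to get
\[
\|\fh-f\|_{L^\aa}\le\|T_h(f-g)\|_{L^\aa}+\|T_hg-g\|_{L^\aa}+\|g-f\|_{L^\aa}\le 2\|f-g\|_{L^\aa}+\|T_hg-g\|_{L^\aa};
\]
by (b) this gives $\limsup_{h\to0}\|\fh-f\|_{L^\aa}\le2\eps$, and $\eps$ is arbitrary. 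There is no genuine obstacle here; the only step needing care is the uniform contraction (a), which is precisely where $\aa\ge1$ enters and without which the lemma would be false.
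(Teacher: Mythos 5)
Your proof is correct, and it takes a genuinely different route from the paper. The paper restricts first to dyadic meshes $h=2^{-j}$, observes that $\fh$ restricted to a unit cube is a martingale with respect to the dyadic $\sigma$-fields, deduces a.e. convergence of $|\fh|^\aa$ from the (sub)martingale convergence theorem, upgrades this to $L^\aa$ convergence via Fatou's lemma and convergence of norms, truncates to handle non-compact support, and finally extends from dyadic to arbitrary $h\to0$ by a brief remark about refining lattices. Your density argument --- the uniform contraction $\|T_h f\|_{L^\aa}\le\|f\|_{L^\aa}$ via Jensen's inequality, combined with uniform convergence on $C_c(\R^d)$ and the $\eps/3$ trick --- is the more standard and, in one respect, cleaner route: it treats arbitrary positive $h$ directly, whereas the paper's extension from nested dyadic lattices to general meshes is the least detailed step of its proof. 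What the paper's martingale formulation buys is reuse elsewhere: the submartingale property supplies the uniform tail bound $\|\fh 1_{[-N,N)^c}\|_{L^\aa}^\aa<\eps$ quoted in the same proof, and the $L^1$ martingale convergence theorem is invoked again in the proof of Theorem \ref{maineq2prop} for the case $\aa\in(0,1)$, where your contraction (a) is unavailable. Your parenthetical about $\aa<1$ is essentially right, with the caveat that for $\aa<1$ an $L^\aa$ function need not be locally integrable, so $\fh$ need not even be defined --- which is precisely why the theorem imposes $f\in L^{\aa-\epsilon}\cap L^1$ in that range.
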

\begin{proof}[Proof of Lemma \ref{laa lemma}]
To reduce notation we assume $d=1$, but the proof holds for general $d$. Fix $k\in\Z$ and consider the sequence of $h$'s such that  $h=2^{-j}$ for  $j\in\N$.
We will exploit the fact that  
$\fh1_{[k,k+1)}$ is a martingale (in time $j$)  with respect to Lebesgue measure on $[k,k+1)$ and with respect to the $\sigma$-fields
generated by the sets  $2^{-j}[i,i+1), i\in\Z$. 

For $\aa\ge 1$, $|\fh|^\aa1_{[k,k+1)}$ is a submartingale which, by the martingale convergence theorem, converges a.s. to $|f|^\aa1_{[k,k+1)}$. 
Thus, Fatou's lemma gives
\beq\label{eq:aag1}
\|\fh1_{[k,k+1)}\|_{L^\aa}^\aa\to \|f1_{[k,k+1)}\|_{L^\aa}^\aa.
\ee
Since $\fh1_{[k,k+1)}$ converges a.s. and the $L^\aa$-norms converge, we have convergence in $L^\aa(\R)$ of $\fh1_{[k,k+1)}$ and also for $\fh1_{[-N,N)}$ for any $N\in\N$.

For $f\in L^\aa(\R)$ without compact support,
simply choose $N$ so that $$\|f1_{[-N,N)^c}\|_{L^\aa}^\aa<\epsilon.$$
Since $|\fh|^\aa1_{[k,k+1)}$ is a submartingale, we also have \mbox{$\|\fh1_{[-N,N)^c}\|_{L^\aa}^\aa<\epsilon$} uniformly in $h$.

Finally, to extend the above to general $h\to 0$.  Note that all we really require is a sequence of lattices such that finer lattices are sublattices of 
prior ones and that the mesh size goes to zero.  But any such sequence has  the same limit in $L^\aa(\R)$, thus we conclude that the only real 
requirement is that the mesh size goes to zero.
\end{proof}
\begin{proof}[{Proof of Theorem \ref{maineq2prop}}]
By the Cr\'amer-Wold device, we must show that for all
$\theta_1,\ldots,\theta_n\in\bbR$ and $f_1,\ldots,f_n\in
L^\aa(\R^d)$,
\[
\sum_{i=1}^n \theta_iM_\alpha^h[f_i]\Longrightarrow \sum_{i=1}^n \theta_iM_\alpha[f_i]\quad \mbox{as}\ h\to 0.
\]
Our proof uses Theorem \ref{theo:whitecase}. First note that the comment following \eqref{eq:fc-noise} shows that switching the order of 
summation in the series $M_\alpha^h[f_i]$ does not affect its distribution.  This, together
with the linearity of $M_\alpha$ and $M_\alpha^h$,  allows us to reduce the above
to verifying
\begin{equation}\nn
M_\alpha^h[f]\Longrightarrow M_\alpha[f]\quad \mbox{as}\ h\to 0
\end{equation}
for a single $f\in L^\aa(\R^d)$. This will follow from Theorem \ref{theo:whitecase} provided we check the two conditions
\begin{equation}\label{eq:cond1}
\lim_{h\to 0}\|f^h\|_{\ell^\alpha}=\|f\|_{L^\alpha}
\end{equation}
and
\beq\label{eq:cond2}
\lim_{h\to 0}\|f^h\|_{\ell^\infty}=0.
\ee

We consider $\aa\in[1,2]$ first. Condition \eqref{eq:cond1} easily follows from \eqref{normsequal} and Lemma \ref{laa lemma}. For \eqref{eq:cond2},
note that convergence of the $L^1$ norms of $|\fh|^\aa$, coupled with a.e. convergence, shows that the family $\{|\fh|^\aa\}_{h\in\N}$ is uniformly integrable.

For $\aa\in(0,1)$, we first consider the sequence of $h$'s such that  $h=2^{-j}$ for  $j\in\N$. By
 uniform integrability and the martingale convergence theorem (see the proof of Lemma \ref{laa lemma}), we see that $\fh1_{[k,k+1)}$ converges in $L^1$ to $f1_{[k,k+1)}$.
The final comment in the proof of Lemma \ref{laa lemma} shows the convergence also holds for arbitrary $h\to 0$. 
 
  Next, note that \mbox{$L^1([k,k+1))$} contains $L^\aa([k,k+1))$
and that the endomorphism on $L^1([k,k+1))$ which maps $$f1_{[k,k+1)}\ \mapsto \ |f|^\aa1_{[k,k+1)}$$ is continuous. Thus Eq. \eqref{eq:aag1} holds for $\aa\in(0,1)$.  

Since $f\in L^\aa$ we can  choose $N_1$ so that $\|f1_{[-N_1,N_1)^c}\|_{L^\aa}^\aa$ is small.
However, to uniformly bound the tails of the $f_h$, we will use the stronger condition of $f\in L^{\aa-\epsilon}$. 
In particular, there exist $N_2>0$, $C>0$ and $\delta>\alpha^{-1}$ such that 
$|x|\ge N_2$ implies $|f(x)|\leq C|x|^{-\delta}$. We have for $|x|\geq N_2+{h}$ that $${h}(\lfloor{h}^{-1}x\rfloor+I)\subset (-N_2,N_2)^c$$ and 
\begin{equation}\label{eq:maj-unif}
|\fh(x)|^\alpha=\left|{h}^{-1}\int_{{h}(\lfloor{h}^{-1}x\rfloor+I)}f(y) \, dy \right|^\alpha\leq C^\aa{h}^{1-\alpha}(|x|-{h})^{-\alpha\delta}.
\end{equation}
Since $\alpha\delta>1$, \eqref{eq:cond1} follows from \eqref{eq:aag1}.
Finally, as before,  we see that \eqref{eq:cond1} along with a.e. convergence gives \eqref{eq:cond2}
for $\aa\in(0,1)$.
\end{proof}

\section*{Acknowledgements}
We are grateful to Gennady Samorodnitsky for helpful correspondence.
\appendix

\end{document}